\newcommand{\be}[1]{\begin{equation}\label{#1}}
\newcommand{\ee}{\end{equation}}
\newtheorem{theorem}{Theorem}[section]
\newtheorem{corollary}{Corollary}
\newtheorem{remark}{Remark}
\newtheorem{proposition}{Proposition}
\newtheorem{conjecture}{Conjecture}
\numberwithin{equation}{section}
\title
[Lieb--Thirring inequalities on manifolds with  negative curvature]
 {Lieb--Thirring inequalities on manifolds with constant negative curvature}
\author{Alexei Ilyin}
\address{Alexei Ilyin:  Keldysh Institute of Applied Mathematics and
Institute for Information Transmission Problems;
ilyin@keldysh.ru}
\author{Ari Laptev}
\address{Ari Laptev:  Department of Mathematics,
Imperial College London,
 and  Sirius University of Science and Technology  Olimpiyskiy ave. b.1,
 Sirius, Krasnodar region, Russia, 354340;
a.laptev@imperial.ac.uk}
\author{Timon Weinmann}
\address{Timon Weinmann:  Department of Mathematics,
Imperial College London;
t.weinmann21@imperial.ac.uk}
\begin{document}

\maketitle

\begin{quote}
{\normalfont\fontsize{8}{10}
\selectfont{\bfseries  Abstract.}
In this short note we prove Lieb--Thirring inequalities
on manifolds with negative constant curvature. The discrete spectrum appears
below the continuous spectrum $(d-1)^2/4, \infty)$, where $d$ is the dimension
of the hyperbolic space.
As an application we obtain a P\'olya type inequality with not a sharp constant.
An example of a 2D domain is given for which numerical calculations suggest that
the P\'olya inequality holds for it.}
\end{quote}

\setcounter{equation}{0}
\section{Introduction}\label{Sec:1}

Lieb--Thirring inequalities have important applications in
mathematical physics, analysis, dynamical systems and attractors, to
mention a few. A current state of the art  of many aspects of the
theory is presented in~\cite{lthbook}. We mention here the
celebrated paper by Lieb and Thirring~\cite{LTh}, where such
inequalities were studied for the questions of stability of matter.

In certain applications Lieb--Thirring inequalities are  considered
on a manifold. For example, on  torus or sphere one has to
impose the zero mean orthogonality condition, see \cite{IMRN}, \cite{ILZ-JFA}, \cite{IZ},
\cite{I-L-MS}, \cite{UMN}. Such inequalities are useful in the study of the dimension of attractors in theory of Navier-Stokes equation.

In this work we prove Lieb--Thirring inequalities on manifolds with negative constant curvature.
Let $\Bbb H^d$, $d\ge2$, be the upper half-space
$$
\Bbb H^d = \{(x,y):\, x\in \Bbb R^{d-1},\, y\in \Bbb R_+\}
$$
with the Poincare metric $ds^2 = y^{-2}(dx^2+ dy^2)$.
We consider the self-adjoint Laplace--Beltrami operator in
$L^2\left(\Bbb H^d, \frac{dx\, dy}{y^d}\right)$
\begin{equation}\label{Delta_h}
-\Delta_h = - y^d \frac{\partial}{\partial y}  \, y^{2-d} \, \frac{\partial}{\partial y} - y^{2} \,
\sum_{n=1}^{d-1} \frac{\partial^2}{\partial x_n^2}.
\end{equation}
The spectrum of the standard Laplacian
$$
-\Delta=  - \sum_{n=1}^d \frac{\partial^2}{\partial x_n^2}
$$
acting in $L^2 (\Bbb R^d, dx)$ is absolutely continuous and covers the whole half-line $[0,\infty)$.
By contrast, the spectrum of the Laplace operator \eqref{Delta_h}
is continuous and covers the interval $[(d-1)^2/4, \infty)$, see for example  \cite{MCK}, \cite{Perry}\, \cite{LaxPhil}.

Denote by $L_{\gamma,d}^{\rm cl}$ the value
$$
L_{\gamma,d}^{\rm cl} = \frac{1}{(2\pi)^d}\, \int_{\Bbb R^d} (1-|\xi|^2)_+^\gamma \, d\xi =
\frac{\Gamma(\gamma+1)}{(4\pi)^{d/2} \Gamma(\gamma + d/2 +1)}.
$$
Let $V= V(x)\ge 0$. The classical Lieb--Thirring  inequality for a
Schr\"odinger operator $-\Delta-V$ on $L^2(\Bbb R^d)$ states
$$
{\rm Tr}\, \left(-\Delta- V\right)_-^{\gamma}\le L_{\gamma, d} \int_{\Bbb R^d} V^{\gamma+d/2} \, dx.
$$
In ~\cite{LTh} E.H. Lieb and W. Thirring proved that it holds
for finite constants $L_{\gamma, d}$ as long as $\gamma>\max(0,1-d/2)$.
In the case $d\ge3, \,\gamma=0$ this bound is known as the
Cwikel--Lieb--Rozenblum (CLR) inequality, see \cite{C,L,R}.
The second critical case $d=1, \gamma=1/2$ was settled by Weidl in \cite{Timo}.
The inequality is known to fail for $d=2, \gamma=0$. For a comprehensive
treatment and references of the subject, see \cite{lthbook}.

\medskip
In this paper we study the spectrum of the Schr\"odinger operator
\begin{equation}\label{Schr}
-\Delta_h - V,
\end{equation}
acting in $L^2\left(\Bbb H^d, \frac{dx\, dy}{y^d}\right)$ and
obtain Lieb--Thirring inequalities for the discrete spectrum below
$(d-1)^2/4$.
It is convenient to denote the eigenvalues
$\{\lambda_k\}$ of the operator \eqref{Schr} in terms of negative
values $\{-\mu_k\}$, where
\begin{equation}\label{mu_subst}
\lambda_k = \frac{(d-1)^2}{4} -\mu_k.
\end{equation}
Let us denote by $R_{1,1}$ the value $R_{1,1}\le 1.456\dots $ that was obtained
in the recent paper  \cite{Frank-Nam} on the Lieb--Thirring inequality
for a one-dimensional Schr\"odinger operator with an operator-valued potential.

\medskip
The main result of the paper is the following:
\begin{theorem}\label{main}
Let $V\ge 0$ and $\gamma\ge 1/2$. Then
$$
\sum \mu_k^\gamma \le L_{\gamma, d} \, \int_{\Bbb H^d} V(x,y)^{\gamma+d/2} \,  \frac{dx\, dy}{y^d},
$$
where
\begin{equation}\label{Lgamma}
L_{\gamma,d}=
\begin{cases}
L_{\gamma,d}^{\rm cl}, & \gamma \ge 3/2,\\
R_{1,1} L_{\gamma,d}^{\rm cl} & 1 \le \gamma< 3/2, \\
2 R_{1,1} L_{\gamma,d}^{\rm cl} & 1/2 \le \gamma< 1.
\end{cases}
\end{equation}
\end{theorem}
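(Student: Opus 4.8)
The plan is to reduce the Lieb--Thirring inequality on $\Bbb H^d$ to the known one-dimensional inequality with an operator-valued potential by exploiting the product structure of the metric and the half-space coordinates $(x,y)$.

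**Setting up the reduction.** First I would perform a change of variables to flatten the problem. The key observation is that the half-space $\Bbb H^d$ factors, after the substitution $y = e^{t}$ with $t \in \Bbb R$, into the $t$-direction and the $x\in\Bbb R^{d-1}$ directions. Concretely, I would conjugate the operator $-\Delta_h$ by a suitable unitary (or multiplication) operator to move from $L^2(\Bbb H^d, y^{-d}\,dx\,dy)$ to a flat $L^2(\Bbb R^d, dx\,dt)$, absorbing the factor $(d-1)^2/4$ so that the bottom of the continuous spectrum is shifted to $0$. After this conjugation the operator should take the separable form
\begin{equation}\label{sep}
-\Delta_h - \frac{(d-1)^2}{4} \;\cong\; -\frac{\partial^2}{\partial t^2} + e^{2t}\Big(-\sum_{n=1}^{d-1}\frac{\partial^2}{\partial x_n^2}\Big),
\end{equation}
where the $x$-Laplacian enters as a nonnegative operator $A(t) = e^{2t}(-\Delta_x)$ parametrized by $t$. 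The substitution $\mu_k = (d-1)^2/4 - \lambda_k$ in \eqref{mu_subst} is precisely what makes the negative eigenvalues $-\mu_k$ of $-\Delta_h - V$ correspond to the bound states of the shifted operator.

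**Applying the operator-valued Lieb--Thirring inequality.** Viewing \eqref{sep} as a one-dimensional Schr\"odinger operator $-\partial_t^2 \otimes I + W(t)$ acting on $L^2(\Bbb R_t; \mathcal{H})$ with $\mathcal{H} = L^2(\Bbb R^{d-1}_x)$ and operator-valued potential built from $A(t)$ and $V$, I would invoke the one-dimensional operator-valued Lieb--Thirring bound from \cite{Frank-Nam} with constant $R_{1,1}$. The abstract inequality controls $\sum \mu_k^\gamma = \Tr(\,\cdot\,)_-^\gamma$ in the $t$-variable by $R_{1,1}$ times an integral over $t$ of the trace (in $\mathcal{H}$) of the $(\gamma+1/2)$-th power of the operator potential. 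The standard lifting argument then raises $\gamma$ from the one-dimensional value to the full $d$-dimensional exponent $\gamma+d/2$: one fixes the constant in the $t$-direction via $R_{1,1}$ and closes the remaining $(d-1)$ fiber directions by the sharp semiclassical constant, which accounts for the passage from $R_{1,1}L^{\rm cl}_{\gamma,1}$-type bounds to $R_{1,1}L^{\rm cl}_{\gamma,d}$ and explains the three-case structure of \eqref{Lgamma}. For $\gamma\ge 3/2$ the sharp classical constant $L^{\rm cl}_{\gamma,d}$ is attainable (the $\gamma\ge 3/2$ sharpness is inherited from the classical Lieb--Thirring theory via the one-dimensional reduction), while the factor $R_{1,1}$ for $1\le\gamma<3/2$ and the additional factor $2$ for $1/2\le\gamma<1$ come from the non-sharp operator-valued bound and a further splitting argument near the critical exponent.

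**Main obstacle.** The hard part will be carrying out the fiber decomposition cleanly: one must verify that after conjugation the potential term $V(x,y)$ transforms into a genuinely nonnegative operator-valued potential $W(t)$ on $\mathcal{H}$ whose fiber trace integrates to the geometric integral $\int_{\Bbb H^d} V^{\gamma+d/2}\,y^{-d}\,dx\,dy$, with the weight $y^{-d}$ and the conformal factor $e^{2t}$ in \eqref{sep} combining correctly. In particular, the lifting in the $(d-1)$ transverse variables must be performed so that the product of the one-dimensional operator-valued constant and the transverse semiclassical constant collapses exactly to $L^{\rm cl}_{\gamma,d}$ (up to the stated prefactors); getting this bookkeeping of constants right, and confirming that the Jacobian of the change of variables leaves the classical constant $L^{\rm cl}_{\gamma,d}$ unchanged rather than introducing spurious $d$-dependent factors, is where the real care is required.
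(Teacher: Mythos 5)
Your proposal follows essentially the same route as the paper: the substitution $y=e^t$, $u=e^{(d-1)t/2}v$ turns $-\Delta_h-(d-1)^2/4$ into $-\partial_t^2-e^{2t}\Delta_x$ on flat $L^2(\mathbb R^d)$, one bounds the eigenvalue moments via the one-dimensional operator-valued Lieb--Thirring inequality applied to $Q(t)=\bigl(e^{2t}\Delta_x+V(\cdot,e^t)\bigr)_+$, then scales out $e^{2t}$ and applies the scalar $(d-1)$-dimensional inequality at exponent $\gamma+1/2$, the Jacobian $e^{(1-d)t}$ reproducing exactly the hyperbolic measure $y^{-d}\,dx\,dy$ and the product $L_{\gamma,1}L_{\gamma+1/2,d-1}$ collapsing to $L_{\gamma,d}$. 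The one piece of bookkeeping you have slightly off is the origin of the constants for $1/2\le\gamma<1$: there the factor $2$ is the sharp Hundertmark--Laptev--Weidl operator-valued constant at $\gamma=1/2$ (lifted by Aizenman--Lieb), while $R_{1,1}$ enters through the transverse scalar inequality because the fiber exponent $\gamma+1/2$ then lies in $[1,3/2)$ --- not from a further splitting argument in the $t$-direction.
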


In the next Section~\ref{Sec:2} we obtain a simple proof of the fact that the
continuous spectrum of the Laplacian on hyperbolic space with curvature $-1$
covers the semi-axis $[(d-1)^2/4,\infty)$. In Section~\ref{Sec:3} we give the
proof the main Theorem \ref{main} and in Section~\ref{Sec:4} we obtain the dual
inequality that could be used for estimates of the dimension of attractors in
theory of the Navier--Stokes equation.

In Section~\ref{Sec:5} we apply Theorem \ref{main} to derive an inequality on
the number of eigenvalues below $\Lambda>0$ for Dirichlet Laplace--Beltrami
operator on a domain $\Omega\subset\Bbb H^d$ of finite measure.

Assume that $\Omega \subset{\overline\Omega} \subset\Bbb H^d$ satisfies the inequality
\begin{equation*}
|\Omega|_h = \int_\Omega \frac{dxdy}{y^d} <\infty.
\end{equation*}
We consider the Dirichlet eigenvalue problem for the Laplace--Beltrami operator
$-\Delta_h$ in $L^2(\Omega, y^{-d} dx dy)$
\begin{equation}\label{dirichlet}
-\Delta_h u = \lambda u, \qquad u\big|_{(x,y)\in \partial\Omega} = 0.
\end{equation}
The spectrum of this operator is discrete and we denote by $\{\lambda_k\}$  its eigenvalues.
Such eigenvalues satisfy the inequality
$$
\lambda_k > \frac{(d-1)^2}{4}.
$$
Similarly to \eqref{mu_subst} it is convenient to introduce number $\nu_k $ such that
$$
\lambda_k = \frac{(d-1)^2}{4} +  \nu_k
$$
and study the counting function $\mathcal N(\Lambda)$  of the spectrum
$$
\mathcal N(\Lambda) = \#\{k: \, \nu_k <\Lambda\}, \qquad \Lambda>0.
$$
\begin{theorem}\label{NL}
Let $|\Omega|_h<\infty$. Then the counting function $\mathcal N(\Lambda)$ of the eigenvalues of the spectral problem \eqref{dirichlet} satisfies the following inequality
\begin{equation}\label{N}
\mathcal N(\Lambda) \le \left(1+\frac{2}{d}\right)^{d/2} \,
\left(1+\frac d2\right)L_{1,d} \, \Lambda^{d/2} |\Omega|_h,
\end{equation}
where $L_{1,d}$ is the constant from Theorem \ref{main},
so that
$$
(1+d/2)L_{1,d}\le R_{1,1}(1+d/2)\,L_{1,d}^{\rm cl}=
R_{1,1}\,L_{0,d}^{\rm cl}.
$$
\end{theorem}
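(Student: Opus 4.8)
The plan is to deduce the counting-function bound from the $\gamma=1$ case of Theorem~\ref{main} along the classical route: a Dirichlet-bracketing step that replaces the Dirichlet problem on $\Omega$ by a Schr\"odinger operator on all of $\Bbb H^d$ with a piecewise-constant potential, followed by a one-parameter Legendre-type optimization.

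First I would shift the spectrum: writing $H=-\Delta_h-(d-1)^2/4$, the operator $H$ is nonnegative on $L^2\!\left(\Bbb H^d,\frac{dx\,dy}{y^d}\right)$, its Dirichlet realization $H^\Omega$ on $\Omega$ has eigenvalues $\nu_k=\lambda_k-(d-1)^2/4>0$, and the $\gamma=1$ inequality of Theorem~\ref{main} reads $\Tr(H-V)_-\le L_{1,d}\int_{\Bbb H^d}V^{1+d/2}\,\frac{dx\,dy}{y^d}$ for every $V\ge0$.

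Next, since $\mathcal N(\Lambda)$ counts the negative eigenvalues of $H^\Omega-\Lambda$, I would bound the stronger quantity $\Tr(H^\Omega-\Lambda)_-=\sum_k(\Lambda-\nu_k)_+$. Taking $V=\Lambda\,\mathbf 1_\Omega$, the quadratic form of $H^\Omega-\Lambda$ is exactly the restriction of the form of $H-V$ to $H_0^1(\Omega)$, a subspace of the form domain of $H-V$ on $\Bbb H^d$. As $\Tr(\cdot)_-$ is a supremum over orthonormal families lying in the form domain, shrinking the family space can only decrease it, whence $\Tr(H^\Omega-\Lambda)_-\le\Tr(H-\Lambda\mathbf 1_\Omega)_-\le L_{1,d}\Lambda^{1+d/2}|\Omega|_h$. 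This bracketing is the step demanding the most care, as it rests on the inclusion of form domains and on the variational characterization of the negative trace.

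Finally I would pass to the counting function. For any $t>1$, applying the previous bound at level $t\Lambda$ and using $(t\Lambda-\nu_k)_+\ge(t-1)\Lambda$ whenever $\nu_k<\Lambda$ gives $(t-1)\Lambda\,\mathcal N(\Lambda)\le L_{1,d}(t\Lambda)^{1+d/2}|\Omega|_h$, i.e. $\mathcal N(\Lambda)\le L_{1,d}\Lambda^{d/2}|\Omega|_h\,t^{1+d/2}/(t-1)$. Minimizing $t^{1+d/2}/(t-1)$ over $t>1$ gives the optimizer $t=1+2/d$, and at this value the prefactor collapses to $(1+2/d)^{d/2}(1+d/2)$, which is precisely \eqref{N}. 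The closing identity $(1+d/2)L_{1,d}^{\rm cl}=L_{0,d}^{\rm cl}$ is immediate from $\Gamma(d/2+2)=(1+d/2)\Gamma(d/2+1)$, while $L_{1,d}=R_{1,1}L_{1,d}^{\rm cl}$ because $\gamma=1$ falls in the middle range of \eqref{Lgamma}.
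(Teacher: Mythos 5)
Your proposal is correct and follows essentially the same route as the paper: the paper likewise bounds $\sum_k(\Upsilon-\nu_k)_+$ via the $\gamma=1$ case of Theorem~\ref{main} applied to the potential $\Lambda$ supported on $\Omega$ (phrased there through the unitary equivalence with the hard-wall potential and the variational principle, which is your form-domain inclusion), and then optimizes $\Upsilon^{1+d/2}/(\Upsilon-\Lambda)$ over $\Upsilon>\Lambda$, with your $t\Lambda$ playing the role of the paper's $\Upsilon$ and the same optimizer $\Upsilon=\Lambda(1+2/d)$. The final constant and the closing identities are verified correctly.
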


The inequality \eqref{N} is a P\'olya type inequality \cite{P} for manifolds with constant negative curvature, where, we believe, the constant is not sharp.

\medskip
\noindent
\begin{conjecture}\label{conj} For the counting function $\mathcal N(\Lambda)$  of the eigenvalues $\lambda_k = (d-1)^2/4 + \nu_k$ of the Dirichlet boundary value problem \eqref{dirichlet}
we have
\begin{equation}\label{polya}
\mathcal N(\Lambda) \le L_{0,d}^{\rm cl} \, \Lambda^{d/2}\, |\Omega|_h.
\end{equation}
\end{conjecture}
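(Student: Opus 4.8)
The plan is to approach the sharp bound \eqref{polya} along the two classical routes to a P\'olya inequality, and to make explicit where the genuine difficulty lies. Both routes begin with the variational reformulation of the counting function: extending a Dirichlet eigenfunction $u$ of $-\Delta_h$ on $\Omega$ by zero to all of $\Bbb H^d$, one computes for the potential $V=\Lambda\chi_\Omega$ that $\langle(-\Delta_h-V)u,u\rangle=(\lambda-\Lambda)\|u\|^2=\big(\tfrac{(d-1)^2}{4}+\nu-\Lambda\big)\|u\|^2$, which lies below $\tfrac{(d-1)^2}{4}\|u\|^2$ exactly when $\nu<\Lambda$. Hence
\[ \mathcal N(\Lambda)\le \#\{k:\mu_k>0\}\quad\text{for } V=\Lambda\chi_\Omega, \]
the number of eigenvalues of $-\Delta_h-\Lambda\chi_\Omega$ below $(d-1)^2/4$, in the notation of \eqref{mu_subst}. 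A bound $\#\{k:\mu_k>0\}\le L_{0,d}^{\rm cl}\,\Lambda^{d/2}|\Omega|_h$ would give \eqref{polya}, which is thus the assertion that the counting estimate holds with the \emph{sharp} classical constant, i.e.\ that Theorem \ref{main} would persist at the endpoint $\gamma=0$ with the best possible constant.

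The first route tries to produce this bound for tiling domains. Fix a cocompact, freely acting lattice $\Gamma<\mathrm{Isom}(\Bbb H^d)$ with $\Omega$ as a fundamental domain, together with a tower $\Gamma\supset\Gamma_1\supset\Gamma_2\supset\cdots$ of finite-index normal subgroups with $N_n:=[\Gamma:\Gamma_n]\to\infty$ and injectivity radius tending to infinity. Gluing $N_n$ translates of $\Omega$ yields the closed manifold $M_n=\Bbb H^d/\Gamma_n$ of volume $N_n|\Omega|_h$. Functions vanishing on the tile walls glue to $H^1(M_n)$ with the same Dirichlet energy, so by min--max $N_n\,\mathcal N(\Lambda)\le\mathcal N_{M_n}(\Lambda)$ and therefore
\[ \mathcal N(\Lambda)\le \frac{\mathcal N_{M_n}(\Lambda)}{N_n}=|\Omega|_h\,\frac{\mathcal N_{M_n}(\Lambda)}{|M_n|_h}\longrightarrow k_d(\Lambda)\,|\Omega|_h, \]
where $k_d$ is the integrated density of states of $-\Delta_h$, obtained as the limit of the normalized counting along the tower (limit-multiplicity/Plancherel) and governed by the hyperbolic spectral density $|c(r)|^{-2}$ with $\lambda=\tfrac{(d-1)^2}{4}+r^2$. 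Thus \eqref{polya} would follow for such $\Omega$ once $k_d(\Lambda)\le L_{0,d}^{\rm cl}\Lambda^{d/2}$.

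The hard part is exactly this last comparison, and it is here that the geometry of $\Bbb H^d$ intervenes. Because the space admits no dilations, P\'olya's scaling step --- shrinking the tiles to recover the free Euclidean density $L_{0,d}^{\rm cl}\Lambda^{d/2}$ --- is unavailable, and one is left with the true spectral density $k_d$. Written through the Gamma function, $|c(r)|^{-2}$ is, up to a positive constant, a polynomial $r^2\prod_{j=1}^{(d-3)/2}(r^2+j^2)$ in odd dimensions and $r\tanh(\pi r)\prod_{j=0}^{d/2-2}\big(r^2+(j+\tfrac12)^2\big)$ in even dimensions. Integrating, one finds $k_2(\Lambda)<L_{0,2}^{\rm cl}\Lambda$ (the $\tanh$ deficit), while the $\Bbb H^3$ on-diagonal heat kernel equals the Euclidean one so that $k_3(\Lambda)=L_{0,3}^{\rm cl}\Lambda^{3/2}$; but the positive lower-order coefficients force $k_d(\Lambda)>L_{0,d}^{\rm cl}\Lambda^{d/2}$ for $d\ge4$ and $\Lambda$ large. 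Consequently the tiling bound proves \eqref{polya} only in dimensions $d=2,3$, consistent with the supporting numerical evidence being two-dimensional, and in $d\ge4$ it is too weak to conclude.

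The second route --- applying a sharp Cwikel--Lieb--Rozenblum bound directly to $\#\{k:\mu_k>0\}$ for $V=\Lambda\chi_\Omega$ --- runs into the complementary obstruction: the step potential is not semiclassically optimal at $\gamma=0$, the non-asymptotic counting constant strictly exceeds $L_{0,d}^{\rm cl}$, and so Theorem \ref{main} cannot be pushed to the classical constant at the endpoint. I therefore expect the decisive obstacle to be twofold. First, even in the favorable dimensions $d=2,3$ the tiling argument treats only fundamental domains; removing this hypothesis for arbitrary $\Omega$ of finite measure is precisely the content of the still-open Euclidean P\'olya conjecture, which no method is known to bypass. Second, in dimensions $d\ge4$ the excess of the hyperbolic density of states over the Weyl value means that a proof, if \eqref{polya} holds at all, must exploit the Dirichlet boundary deficit quantitatively rather than merely the leading volume term --- something none of the bracketing or Lieb--Thirring techniques currently deliver.
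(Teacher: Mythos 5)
You have not proved the statement, but neither does the paper: this is Conjecture \ref{conj}, which the authors explicitly leave open. The paper's actual content around it is (i) the weaker Theorem \ref{NL}, proved via the same variational device you use (replace the Dirichlet problem by $-\Delta_h-(d-1)^2/4-\Lambda\chi_\Omega$ and apply Theorem \ref{main} with $\gamma=1$ plus optimization over $\Upsilon$, yielding the constant $(1+2/d)^{d/2}(1+d/2)L_{1,d}$ in \eqref{NLambda}), and (ii) a purely numerical check in Section~\ref{Sec:7} for one product domain in $\Bbb H^2$; the authors even remark that they know of no domain for which \eqref{polya} is \emph{proved}. So there is no paper proof to compare against, and your decision to present routes and obstructions rather than claim a proof is the correct reading of the situation. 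Your second route matches the paper's method exactly and your diagnosis of why it cannot reach the sharp constant at $\gamma=0$ is accurate: the endpoint constants in Theorem \ref{main} and in any CLR-type bound strictly exceed $L^{\rm cl}_{0,d}$.

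Your first route contains a genuine contribution beyond the paper, with two caveats you should make explicit. If $\Omega$ is a fundamental domain of a cocompact lattice $\Gamma$ (such lattices exist and are residually finite in every dimension by arithmetic constructions), Dirichlet bracketing on $M_n=\Bbb H^d/\Gamma_n$ plus the DeGeorge--Wallach limit-multiplicity theorem gives $\mathcal N(\Lambda)\le k_d(\Lambda)\,|\Omega|_h$, where $k_d$ is the integrated Plancherel density; you must note that the finitely many eigenvalues of $M_n$ below $(d-1)^2/4$ are harmless precisely because the Plancherel measure has no mass there, so their normalized count vanishes along the tower. Your evaluation of $|c(r)|^{-2}$ is correct: $k_2(\Lambda)<L^{\rm cl}_{0,2}\Lambda$, $k_3(\Lambda)=L^{\rm cl}_{0,3}\Lambda^{3/2}$, and the positive lower-order coefficients force $k_d(\Lambda)>L^{\rm cl}_{0,d}\Lambda^{d/2}$ for large $\Lambda$ when $d\ge4$. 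Thus, modulo the careful citation of limit multiplicity, you would actually establish \eqref{polya} for fundamental domains in $d=2,3$ --- more than the paper records --- while for $d\ge4$ the argument only shows the method is too weak, not that the conjecture fails: as you note, the interior excess is of order $\Lambda^{d/2-1}$ while the Dirichlet boundary deficit is of order $\Lambda^{(d-1)/2}$, which dominates, so no counterexample emerges this way. The remaining gap is the one you name: for general $\Omega$ of finite hyperbolic measure the tiling hypothesis cannot be removed by any known method, exactly as in the Euclidean P\'olya conjecture, so the statement as posed remains open and your attempt correctly terminates where the paper's knowledge also ends.
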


\begin{remark}
At the moment we do not have any examples of $\Omega$ for which the inequality \eqref{polya} holds.
\end{remark}

In Section~\ref{Sec:6} we consider the special case of Theorem \ref{NL}, where $\Omega = \widetilde\Omega \times (a,b)$, $\widetilde\Omega\subset \Bbb R^{d-1}$, is a domain of finite Lebesgue measure and $0<a<b\le\infty$. This additional structure of $\Omega$ allows us to obtain a better constant than the constant that could be derived from Theorem \ref{main} in the case $1/2\le \gamma < 1$, see Theorem \ref{product}. Unfortunately it does not imply the improvement of the constant found in Theorem \ref{NL}.

Finally in Section~\ref{Sec:7}  we give an example
of a domain in $\mathbb H^2$ supporting the conjecture \ref{conj} using numerics.


\setcounter{equation}{0}
\section{Some preliminary results}\label{Sec:2}

In \cite{MCK} the author gives a simple proof of the fact that the continuous spectrum of the operator $-\Delta_h$ in dimension two coincides with the interval $[1/4,\infty)$ by using the Cauchy-Schwarz inequality. Besides, he gives a more complicated proof of the fact that  in the case of $\Bbb H^d$, $d>2$ the continuous spectrum fills the half-axis $[(d-1)^2/4, \infty)$.

In this section we present a simple proof of the following well-known fact.
\begin{proposition}
Let $-\Delta_h$ be the Laplacian in $L^2\left(\Bbb H^d, \frac{dx\, dy}{y^d}\right)$. Then the continuous spectrum coincides with $\sigma_c = [(d-1)^2/4,\infty)$.
\end{proposition}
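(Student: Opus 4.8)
The plan is to exploit the translation invariance of $-\Delta_h$ in the $x$-variable together with a change of variables that simultaneously flattens the half-line $\mathbb R_+$ and removes the weight $y^{-d}$. First I would substitute $y=e^t$, $t\in\mathbb R$, and apply the unitary map $U\colon L^2(\mathbb H^d,y^{-d}dx\,dy)\to L^2(\mathbb R^{d-1}\times\mathbb R,dx\,dt)$ given by $(Uu)(x,t)=e^{(1-d)t/2}\,u(x,e^t)$, chosen precisely so that the awkward measure disappears. A direct computation on the Dirichlet form
$$
Q[u]=\int_{\mathbb H^d} y^{2-d}\big(|\partial_y u|^2+|\nabla_x u|^2\big)\,dx\,dy ,
$$
in which the resulting cross term $\tfrac{d-1}{2}\int(|w|^2)'\,dt$ integrates to zero, shows that $-\Delta_h$ is unitarily equivalent to
$$
H=-\partial_t^2+\frac{(d-1)^2}{4}-e^{2t}\Delta_x
$$
on $L^2(\mathbb R^{d-1}\times\mathbb R)$. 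This normal form is the heart of the argument and reduces everything to an elementary analysis.

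For the lower bound I would read off directly from the transformed quadratic form
$$
Q[u]=\int_{\mathbb R^{d-1}\times\mathbb R}\Big(|\partial_t w|^2+\tfrac{(d-1)^2}{4}|w|^2+e^{2t}|\nabla_x w|^2\Big)\,dx\,dt\ \ge\ \frac{(d-1)^2}{4}\,\|w\|^2 ,
$$
since both kinetic terms are nonnegative; this is the manifest form of the Hardy/Cauchy--Schwarz estimate used in \cite{MCK}. Hence $H\ge(d-1)^2/4$, and therefore $\sigma(-\Delta_h)\subseteq[(d-1)^2/4,\infty)$.

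For the reverse inclusion, and to see that every such $\lambda$ lies in the \emph{continuous} spectrum, I would construct singular (Weyl) sequences. Writing $\lambda=\tfrac{(d-1)^2}{4}+k^2$ with $k\ge0$, I take $u_n(x,t)=\psi_n(x)\,\phi_n(t)\,e^{ikt}$, where $\psi_n(x)=n^{-(d-1)/2}\psi(x/n)$ spreads out in $x$ so that $\Delta_x\psi_n\to0$, and $\phi_n$ is a slowly varying bump in $t$ centred at a point $t_n\to-\infty$. On the support of $\phi_n$ the confining factor $e^{2t}$ is negligible, so $(H-\lambda)u_n\to0$ in $L^2$ while $\|u_n\|$ stays bounded below and $u_n\rightharpoonup0$; this places $\lambda$ in the essential, and in fact continuous, spectrum.

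The main obstacle is this last step: one must balance the relative scales of $\psi_n$, $\phi_n$ and $t_n$ so that the three error terms---the $x$-Laplacian acting on the spread-out $\psi_n$, the $t$-derivatives landing on $\phi_n$ rather than on $e^{ikt}$, and the residual $e^{2t_n}$---all vanish simultaneously, and so that the sequence tends weakly to zero, which rules out point spectrum. Everything else is routine. As an alternative packaging of the same facts, a Fourier transform in $x$ realises $H$ as the direct integral $\int^{\oplus}\big(-\partial_t^2+\tfrac{(d-1)^2}{4}+|\xi|^2e^{2t}\big)\,d\xi$; each fibre has a potential that decays to $(d-1)^2/4$ as $t\to-\infty$ and grows at $+\infty$, giving essential spectrum $[(d-1)^2/4,\infty)$ with no eigenvalue below, and the continuity of the parameter $\xi$ confirms the absence of point spectrum for the full operator.
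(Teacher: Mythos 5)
Your proposal is correct and follows essentially the same route as the paper: the substitution $y=e^t$, $u=e^{(d-1)t/2}v$ reducing $-\Delta_h$ to $-\partial_t^2-e^{2t}\Delta_x+\tfrac{(d-1)^2}{4}$ on $L^2(\mathbb R^d)$, from which the lower bound is read off the quadratic form. The only difference is that where the paper declares the spectrum of $-\partial_t^2-e^{2t}\Delta_x$ to be ``obviously'' $[0,\infty)$, you actually justify it with Weyl sequences escaping to $t\to-\infty$ and the fibered direct-integral argument, which is a welcome filling-in of detail rather than a new approach.
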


\begin{proof}
Let us consider the quadratic form of the operator \eqref{Delta_h}
$$
(-\Delta_h u, u) = \int_{\Bbb H^n}  y^{2-d} \,\left(|\partial_y u|^2
+\sum_{n=1}^{d-1} |\partial_{x_n} u|^2\right)  \, dx \, dy.
$$
The substitution
\begin{equation}\label{subst}
y = e^t, \qquad u = e^{\frac{d-1}{2} \,t} \, v,
\end{equation}
implies
$$
\iint_{\Bbb H^d} |u|^2 \, \frac{dx\, dy}{y^d} =
\int_{\Bbb R^n} |v|^2\, dx\, dt.
$$
and
$$
(-\Delta_h u, u) = \int_{\Bbb R^d}\left( |v_t'|^2 + \frac{(d-1)^2}{4}\, |v|^2  + e^{2t} \sum_{n=1}^{d-1}
|\partial_{x_n} v|^2\right) \, dxdt.
$$
Thus we reduce the hyperbolic Laplacian to the operator in $L^2(\Bbb R^d) $
$$
-\frac{\partial^2}{\partial t^2} - e^{2t} \, \Delta_x  + \frac{(d-1)^2}{4}.
$$
Obviously the spectrum of the differential part of the above expression coincided with $[0,\infty)$ and the term $\frac{(d-1)^2}{4}$ gives the required shift of the spectrum.
The proof is complete.
\end{proof}

\begin{remark} It might be interesting to obtain a simple proof of properties of the spectrum of the operator $-\Delta_h$ in the case when the negative curvature is not a constant.
\end{remark}

\medskip
In order to prove Theorem \ref{main} we need to recall some results on 1D Schr\"odinger operators with operator-valued potentials.

\begin{proposition}\label{2}
Let $Q=Q(x)\ge 0$ be self-adjoint operator-valued function in a Hilbert space $G$ for a.e. $x\in\Bbb R$. We assume that ${\rm Tr}\, Q(\cdot) \in L^{\gamma+1/2} (\Bbb R, G)$, $\gamma\ge1/2$.
Then
$$
{\rm Tr}\, \left(\frac{d^2}{dx^2} \otimes I_{G} - Q\right)_-^\gamma \le
L_{\gamma,1} \, \int_{\Bbb R} {\rm Tr}\, Q^{\gamma+1/2}\, dx,
$$
where $I_{G}$ is the identity operator in $G$.
\end{proposition}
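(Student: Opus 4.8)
The plan is to prove the inequality separately on the three $\gamma$-ranges appearing in \eqref{Lgamma}, in each case reducing to a known operator-valued ``base'' estimate and then raising the exponent by the Aizenman--Lieb argument, which preserves the ratio of the constant to $L_{\gamma,1}^{\rm cl}$. For the top range $\gamma\ge 3/2$ I would simply invoke the sharp operator-valued Lieb--Thirring inequality of Laptev and Weidl, which already delivers the classical constant $L_{\gamma,1}^{\rm cl}$ for \emph{all} $\gamma\ge 3/2$ and all trace-class--valued potentials $Q$, so nothing further is needed there. For the middle range $1\le\gamma<3/2$ the base point is $\gamma=1$, where the operator-valued bound with constant $R_{1,1}L_{1,1}^{\rm cl}$ is exactly the theorem of \cite{Frank-Nam}.

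The lifting step is the Aizenman--Lieb identity
\[
(E)_-^{\gamma}=\frac{1}{B(\gamma-\sigma,\sigma+1)}\int_0^\infty \tau^{\gamma-\sigma-1}(E+\tau)_-^{\sigma}\,d\tau,\qquad \gamma>\sigma\ge 0,
\]
applied to the eigenvalues $E$ of the operator $H$ of the statement. Since passing from $H$ to $H+\tau$ amounts to replacing $Q$ by $Q-\tau I_G$, the base estimate at exponent $\sigma$ reads $\Tr(H+\tau)_-^{\sigma}\le C_\sigma\int_{\Bbb R}\Tr(Q-\tau I_G)_+^{\sigma+1/2}\,dx$. Taking the trace in the identity, integrating against $\tau^{\gamma-\sigma-1}$, exchanging the $\tau$- and $x$-integrations, and using $\int_0^\infty \tau^{\gamma-\sigma-1}(q-\tau)_+^{\sigma+1/2}\,d\tau=q^{\gamma+1/2}B(\gamma-\sigma,\sigma+3/2)$ for each eigenvalue $q\ge 0$ of $Q(x)$, one recovers $\int_{\Bbb R}\Tr Q^{\gamma+1/2}\,dx$ on the right with the constant multiplied by $B(\gamma-\sigma,\sigma+3/2)/B(\gamma-\sigma,\sigma+1)$. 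A direct computation with the Gamma function shows this factor converts $L_{\sigma,1}^{\rm cl}$ into $L_{\gamma,1}^{\rm cl}$, so the ratio to the classical constant is unchanged. Choosing $\sigma=1$ then fills $1\le\gamma<3/2$ with ratio $R_{1,1}$.

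The range $1/2\le\gamma<1$ is the crux, and I expect it to be the main obstacle, because Aizenman--Lieb can only raise $\gamma$ and cannot reach below the base points above. The plan is to establish the critical endpoint $\gamma=1/2$ directly, with constant $2R_{1,1}L_{1/2,1}^{\rm cl}$, and then lift to $1/2<\gamma<1$ by the same computation with $\sigma=1/2$. For the endpoint itself I would adapt the single-bound-state / induction-on-bound-states technique of Hundertmark--Laptev--Weidl to operator-valued potentials: the factor $2$ is the familiar non-sharp loss at the critical exponent (already visible in the scalar case, where $2L_{1/2,1}^{\rm cl}=1/2$ is precisely the sharp one-bound-state constant $\mu^{1/2}\le\frac12\int V$), while the factor $R_{1,1}$ enters because, for operator-valued $Q$, the single-channel input must be routed through the $\gamma=1$ bound of \cite{Frank-Nam} rather than a sharp scalar inequality. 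The delicate point will be to make the telescoping work uniformly over the (possibly infinite-dimensional) fibre $G$ while keeping the constant at exactly $2R_{1,1}$.
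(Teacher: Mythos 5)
Your treatment of the ranges $\gamma\ge 3/2$ (citing Laptev--Weidl \cite{Lap-Weid}) and $1\le\gamma<3/2$ (the operator-valued $\gamma=1$ bound of \cite{Frank-Nam} followed by Aizenman--Lieb lifting) coincides with what the paper does, and your lifting computation --- replacing $Q$ by $(Q-\tau I_G)_+$, integrating against $\tau^{\gamma-\sigma-1}$, and checking that the Beta-function ratio converts $L^{\rm cl}_{\sigma,1}$ into $L^{\rm cl}_{\gamma,1}$ --- is correct.

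The gap is in the range $1/2\le\gamma<1$, exactly where you anticipate ``the main obstacle''. You treat the operator-valued endpoint $\gamma=1/2$ as something still to be manufactured, by adapting the Hundertmark--Laptev--Weidl induction on bound states to operator-valued potentials and routing the single-channel input through the $\gamma=1$ bound of \cite{Frank-Nam}, and you explicitly leave the key step (making the telescoping work uniformly over the fibre $G$) unresolved. But the theorem of \cite{HLW} is already stated and proved for operator-valued potentials: it gives $\Tr\bigl(-\tfrac{d^2}{dx^2}\otimes I_G-Q\bigr)_-^{1/2}\le 2L^{\rm cl}_{1/2,1}\int_{\R}\Tr Q\,dx$ with the sharp constant $2L^{\rm cl}_{1/2,1}$ and no factor $R_{1,1}$; the paper's argument on this range is precisely this citation followed by the same Aizenman--Lieb lift with $\sigma=1/2$. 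So your plan is incomplete as written (the endpoint is asserted, not proved), and even if the sketch could be carried out it would only yield the weaker constant $2R_{1,1}L^{\rm cl}_{\gamma,1}$. Fed into the proof of Theorem \ref{main}, where the $(d-1)$-dimensional scalar inequality at exponent $\gamma+1/2\in[1,3/2)$ already contributes its own factor $R_{1,1}$, this would degrade the constant in \eqref{Lgamma} from $2R_{1,1}L^{\rm cl}_{\gamma,d}$ to $2R_{1,1}^2L^{\rm cl}_{\gamma,d}$. The remedy is simply to quote \cite{HLW} for the operator-valued $\gamma=1/2$ case and lift from there.
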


If $\gamma = 1/2$ then the constant $L_{\gamma=1/2,1} = 2L_{1/2,1}^{\rm cl}$ and it is sharp. This was obtained in \cite{HLW} and from this one immediately obtains that
$L_{\gamma,1}\le 2L_{\gamma,1}^{\rm cl}$ with $1/2\le \gamma<1$. (For the scalar case the sharp constant in the case $\gamma = 1/2$ was obtained in \cite{HLThom}.

If $\gamma=1$ then the sharp constant is unknown and the best known constant
for some years was refereed to  \cite{DLL} (see also~\cite{EF}). It is only recently this constant
was improved in the paper \cite{Frank-Nam}, where the authors found that
$L_{1,1} \le R_{1,1} L_{1,1}^{\rm cl}$ with
$R_{1,1}\le1.456 \dots.$  This leads to the estimate
$L_{\gamma,1}\le R_{1,1} L_{\gamma,1}^{\rm cl}$ with $1\le \gamma<3/2$.

Finally for any $\gamma\ge3/2$ the above Proposition was proved in \cite{Lap-Weid} and in this case we have sharp constants $L_{\gamma,1} = L_{\gamma,1}^{\rm cl}$.

In all cases the authors first obtained inequalities for $\gamma=1/2, 1$ and $3/2$ that were afterwards extended to arbitrary $\gamma$'s using lifting argument found by Aizenman and Lieb \cite{AizLieb}.

\setcounter{equation}{0}
\section{The proof on the main result}\label{Sec:3}

Let us consider the quadratic form of the operator \eqref{Delta_h}
$$
(-\Delta_h u, u) = \int_{\Bbb H^n}  y^{2-d} \,\left(|\partial_y u|^2
+\sum_{n=1}^{d-1} |\partial_{x_n} u|^2\right)  \, dx \, dy.
$$
Applying the exponential change of variables that was introduced
in the previous section we reduce the problem to the study of
the spectrum defined by the form in $L^2(\Bbb R^d)$
$$
\int_{\Bbb R^d}\left( |v_t'|^2   + e^{2t} \sum_{n=1}^{d-1}
|\partial_{x_n} v|^2    - V(x,e^t) |v|^2 \right) \, dxdt = -\mu \, \int_{\Bbb R^d} |v|^2 \, dx dt.
$$

 \medskip
 \noindent
Using the variational principle and the Lieb--Thirring inequalities for 1D
Schr\"odinger operators with operator-valued symbols, see Proposition \ref{2},
we obtain
\begin{multline*}
\sum \mu_k^\gamma(-\Delta_h -V) \le \sum \mu_k^\gamma\left(-\frac{d^2}{dt^2} -
\left(e^{2t} \, \sum_{n=1}^{d-1}
\frac{\partial^2}{\partial x_n^2} + V(x,e^{t})\right)_+ \right)\\
 \le L_{\gamma,1}  \, \int_{\Bbb R} {\rm Tr}\, \left( -e^{2t}\, \sum_{n=1}^{d-1}
\frac{\partial^2}{\partial x_n^2} - V(x,e^{t})\right)^{\gamma +1/2}_- \, dt.
\end{multline*}
This trick reduced the problem to a Schr\"odinger operator in $L^2(\Bbb R^{d-1})$,
where the exponential $e^{2t}$ is just a parameter:
\begin{multline*}
{\rm Tr}\, \left( -e^{2t}\, \sum_{n=1}^{d-1}
\frac{\partial^2}{\partial x_n^2} - V(x,e^{t})\right)^{\gamma +1/2}_-\\=
e^{2t(\gamma+1/2)}{\rm Tr}\, \left( -\sum_{n=1}^{d-1}
\frac{\partial^2}{\partial x_n^2} -e^{-2t}\,  V(x,e^{t})\right)^{\gamma +1/2}_-.
\end{multline*}
Therefore, applying the standard Lieb-Thirring inequality in dimension $d-1$, we find
\begin{multline*}
\sum \mu_k^\gamma \le
L_{\gamma,1} L_{\gamma+1/2, d-1} \, \int_{\Bbb R^d}
e^{(1-d)t}\,V(x,e^{t})^{\gamma +d/2} \, dxdt \\
=  L_{\gamma, d} \, \int_{\Bbb H^d}
V(x,y)^{\gamma +d/2} \, \frac{dx dy}{y^d}.
\end{multline*}
The best known constants $L_{\gamma,d}$ in the latter equality
are defined by the best constants that appear in Proposition \ref{2}.
The relation
$L_{\gamma,1} L_{\gamma+1/2, d-1}=L_{\gamma, d}$
follows from the  relation
$L_{\gamma,1}^{\rm cl} L_{\gamma+1/2, d-1}^{\rm cl}=L_{\gamma, d}^{\rm cl}$
and definition~\eqref{Lgamma}.
The proof is complete.


\setcounter{equation}{0}
\section{Dual inequalities}\label{Sec:4}

Consider an orthonormal set of function $\{u_m\}_{m=1}^M$ in $L^2(\Bbb H^d, y^{-d} \, dx dy)$. Then using the exponential change
$$
y=e^t, \qquad u_m = e^{(d-1)t/2}\, v_m
$$
we find
$$
\delta_{m,l} = \int_{\Bbb H^d} u_m {\overline u_l} \frac{dx dy}{y^d} =
\int_{\Bbb R^d} v_m {\overline v_l} \, dx dt.
$$
Namely, this shows that if the functions $\{u_m\}_{m=1}^M$ are orthonormal in $L^2(\Bbb H^d, y^{-d} \, dx dy)$ then the functions $\{v_m\}_{m=1}^M$ are orthonormal in
$L^2(\Bbb R^d)$.

\medskip
Assuming $\gamma = 1$ we obtain
\begin{multline*}
\sum_{m=1}^M
\int_{\Bbb R^d} \left(|\partial_t v_m|^2 + e^{2t}\,\sum_{n=1}^{d-1} |\partial_{x_n} v_m|^2  - V(x,e^t) |v_m|^2 \right) \,dx dt\\
\ge   - \sum_m \mu_m
\ge
- L_{1, d} \, \int_{\Bbb R^d} V(x,e^t)^{1 +d/2} \, dxdt.
\end{multline*}

\noindent
Thus
\begin{multline*}
\sum_{m=1}^M \int_{\Bbb R^d}\left(|\partial_t v_m|^2 +
e^{2t}\, \sum_{n=1}^{d-1} |\partial_{x_n} v_m|^2 \right) \,dx dt \\
\ge
\int_{\Bbb R^d} \left(V(x,e^t)\, \sum_{m=1}^M  |v_m|^2  - L_{1, d} \,
V(x,e^t)^{1 +d/2}\right) \, dx dt \\
= \int_{\Bbb R^d} \left(V(x,e^t)\, \widetilde\rho  - L_{1, d} \,
V(x,e^t)^{1 +d/2}\right) \, dx dt,
\end{multline*}
where $\widetilde\rho = \sum_{m=1}^N  |v_m|^2$. We now choose
$$
V = \left( \frac{\widetilde\rho}{L_{1,d} (1 + d/2)} \right)^{2/d}
$$
and find
\begin{equation}\label{tilderho}
\sum_{m=1}^M
\int_{\Bbb R^d}\left(|\partial_t v_m|^2 +
e^{2t}\sum_{n=1}^{d-1} |\partial_{x_n} v_m|^2 \right) \,dx dt
\ge
K_{1,d} \, \int_{\Bbb R^d} (\widetilde\rho)^{1+ 2/d}  \, dxdt,
\end{equation}
where
$$
K_{1,d} = \frac{2}{d} \left(1 + \frac{d}{2}\right)^{1+2/d} L_{1,d}^{2/d}.
$$
Returning to the orthonormal system of functions $\{u_m\}$ and denoting
by $\rho = \sum_{m=1}^M |u_m|^2$ we obtain
$$
\int_{\Bbb R^d} (\widetilde\rho)^{1+ 2/d}  \, dxdt = \int_{\Bbb H^d} y^{\frac{2(1-d)}{d}}
\rho^{1+2/d}\, \frac{dxdy}{y^d}.
$$
Besides, when passing from the quadratic forms in the left hand side of
\eqref{tilderho}  to the quadratic forms  $(-\Delta_h u_m,u_m)$ we
have to add the shift
$$
(d-1)^2/4 \|u_m\|_{L^2(\Bbb H^d,y^{-d}dxdy)} = (d-1)^2/4, \quad m=1,2,\dots, M.
$$
Finally we have

\bigskip
\noindent
\begin{theorem}\label{dual}
Let $\gamma=1$ and let $\{u_m\}_{m=1}^M$ be an orthonormal system of function in $L^2(\Bbb H^d, y^{-d} dx dy)$. Then
$$
\sum_{m=1}^M (-\Delta_h u_m,u_m) \ge K_{1,d} \int_{\Bbb H^d} y^{\frac{2(1-d)}{d}}
\rho^{1+2/d}\, \frac{dxdy}{y^d}
+ M\, \frac{(d-1)^2}{4}.
$$
\end{theorem}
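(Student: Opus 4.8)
The plan is to obtain this dual inequality from the direct Lieb--Thirring bound of Theorem \ref{main} (specialised to $\gamma=1$) by a Legendre--duality argument. The underlying principle is the variational lower bound: for any self-adjoint operator $A$ bounded below and any orthonormal system $\{v_m\}_{m=1}^M$ in a Hilbert space,
$$
\sum_{m=1}^M (A v_m, v_m) \ge -{\rm Tr}\, A_-,
$$
since the left-hand side is smallest when the $v_m$ are taken among eigenfunctions belonging to the lowest eigenvalues, whose sum cannot fall below the sum of all negative eigenvalues. Applying this with $A = -\partial_t^2 - e^{2t}\Delta_x - V(x,e^t)$ in $L^2(\mathbb R^d)$, and invoking Theorem \ref{main} in the form $\sum_m \mu_m \le L_{1,d}\int_{\mathbb R^d} V(x,e^t)^{1+d/2}\,dx\,dt$, yields the chain of inequalities already displayed above the theorem.

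The concrete steps I would carry out are as follows. First, I would pass from the orthonormal system $\{u_m\}$ on $\mathbb H^d$ to the orthonormal system $\{v_m\}$ on $\mathbb R^d$ through the substitution $y=e^t$, $u_m = e^{(d-1)t/2}v_m$, using the orthonormality preservation verified at the start of this section. Second, combining the variational bound with Theorem \ref{main}, I would obtain, for an arbitrary $V=V(x,e^t)\ge 0$,
$$
\sum_{m=1}^M \int_{\mathbb R^d}\Bigl(|\partial_t v_m|^2 + e^{2t}\sum_{n=1}^{d-1}|\partial_{x_n}v_m|^2\Bigr)\,dx\,dt
\ge \int_{\mathbb R^d}\bigl(V\widetilde\rho - L_{1,d}V^{1+d/2}\bigr)\,dx\,dt,
$$
where $\widetilde\rho = \sum_{m=1}^M |v_m|^2$. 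Third, I would optimise the integrand pointwise in $V$: the concave map $V\mapsto V\widetilde\rho - L_{1,d}V^{1+d/2}$ attains its maximum at $V=(\widetilde\rho/(L_{1,d}(1+d/2)))^{2/d}$, and substituting this choice produces the clean lower bound \eqref{tilderho} with the explicit constant $K_{1,d}$.

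Finally, I would translate back to $\mathbb H^d$. Reversing the substitution converts $\int_{\mathbb R^d}\widetilde\rho^{\,1+2/d}\,dx\,dt$ into $\int_{\mathbb H^d}y^{2(1-d)/d}\rho^{1+2/d}\,y^{-d}\,dx\,dy$ with $\rho=\sum_m|u_m|^2$, while re-expressing the quadratic forms in terms of $(-\Delta_h u_m,u_m)$ reintroduces the spectral shift $(d-1)^2/4$ per function, giving the additive term $M(d-1)^2/4$. I expect the main point requiring care to be bookkeeping rather than genuine analytic difficulty: precisely tracking how the weight $y^{2(1-d)/d}$ arises in the density term from the Jacobian of the change of variables, and confirming that the pointwise optimisation in $V$ is legitimate (it is, because the intermediate inequality holds for every nonnegative $V$, so one is free to insert the maximising choice). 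Assembling these ingredients yields the stated inequality.
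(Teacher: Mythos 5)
Your proposal follows essentially the same route as the paper's own derivation in Section~\ref{Sec:4}: pass to the orthonormal system $\{v_m\}$ via the exponential substitution, combine the variational bound $\sum_m(Av_m,v_m)\ge-\Tr A_-$ with Theorem~\ref{main} at $\gamma=1$ for an arbitrary $V\ge0$, optimise pointwise in $V$ to get \eqref{tilderho}, and transform back to $\Bbb H^d$, picking up the shift $(d-1)^2/4$ per function. The argument is correct and matches the paper step for step.
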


Assume that $M=1$ and $u\in H^1(\Bbb H^d, y^{-d} dxdy)$ and denote
$$
\|u\| = \left( \int_{\Bbb H^d} |u|^2\, \frac{dxdy}{y^d}\right)^{1/2}.
$$
Then
\begin{corollary}\label{sobolev}
For a function $u\in H^1(\Bbb H^d, y^{-d} dxdy)$ we have
\begin{multline*}
\|u\|^{4/d} \, \int_{\Bbb H^d} y^{2-d} \left(|\partial_y u|^2 + |\nabla_x u|^2\right) \, dx dy  \\
\ge K_{1,d} \int_{\Bbb H^d} y^{\frac{2(1-d)}{d}}  |u|^{2 + 4/d} \,\,\frac{dx dy}{y^d} + \frac{(d-1)^2}{4}  \, \|u\|^{2+4/d}.
\end{multline*}
\end{corollary}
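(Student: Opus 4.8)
The plan is to derive Corollary \ref{sobolev} directly from Theorem \ref{dual} by specializing to a single function and normalizing. First I would take $M=1$ in Theorem \ref{dual}, so that the orthonormal system consists of a single function $u_1$. The orthonormality condition then reduces to the normalization $\|u_1\|=1$, and the density is simply $\rho = |u_1|^2$. With these substitutions Theorem \ref{dual} reads
\begin{equation*}
(-\Delta_h u_1, u_1) \ge K_{1,d} \int_{\Bbb H^d} y^{\frac{2(1-d)}{d}} |u_1|^{2+4/d}\, \frac{dx\,dy}{y^d} + \frac{(d-1)^2}{4},
\end{equation*}
valid for every normalized $u_1$. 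Rewriting the quadratic form $(-\Delta_h u_1, u_1)$ using its explicit integral expression from \eqref{Delta_h}, namely $\int_{\Bbb H^d} y^{2-d}(|\partial_y u_1|^2 + |\nabla_x u_1|^2)\,dx\,dy$, yields the desired inequality in the special case $\|u\|=1$.

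The remaining step is to remove the normalization by a scaling argument. For a general $u \in H^1(\Bbb H^d, y^{-d}dx\,dy)$ with $u \neq 0$, I would apply the normalized inequality to $u_1 = u/\|u\|$. Each of the three terms is homogeneous in $u$: the left-hand Dirichlet form scales like $\|u\|^{-2}$ times the form of $u$, the density integral scales like $\|u\|^{-(2+4/d)}$ times the corresponding integral of $u$, and the constant term $(d-1)^2/4$ remains unchanged. Multiplying through by $\|u\|^{2+4/d}$ clears all the normalization factors and produces exactly the stated inequality, with the Dirichlet form multiplied by $\|u\|^{2+4/d}/\|u\|^2 = \|u\|^{4/d}$, which matches the prefactor on the left-hand side of the corollary.

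The main point requiring care is bookkeeping of the scaling exponents rather than any genuine obstacle. One must verify that $\|u/\|u\|\,\|^{2+4/d} = \|u\|^{-(2+4/d)}$ cancels correctly against the homogeneity of the density integral, and that the factor $\|u\|^{4/d}$ emerging on the left is precisely what the corollary records. Since all three terms have been checked to scale consistently and the constant term is scale-invariant (absorbing the full $\|u\|^{2+4/d}$ factor to appear as $\tfrac{(d-1)^2}{4}\|u\|^{2+4/d}$), the argument closes with no analytic difficulty; the entire content is already contained in Theorem \ref{dual}, and the corollary is simply its dehomogenized restatement for a single function.
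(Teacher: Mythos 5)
Your proposal is correct and coincides with the paper's (implicit) argument: the corollary is obtained by taking $M=1$ in Theorem \ref{dual}, applying it to the normalized function $u/\|u\|$, and multiplying through by $\|u\|^{2+4/d}$, exactly as you describe. The scaling bookkeeping you perform checks out and reproduces the stated prefactors.
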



\setcounter{equation}{0}
\section{Proof of Theorem \ref{NL}}\label{Sec:5}

Let $\Omega \subset{\overline\Omega} \subset\Bbb H^d$ be a domain of finite measure $|\Omega|_h$ and let us introduce the following potential
$$
V =
\begin{cases}
1, & (x,y)\in \Omega,\\
-\infty, & (x,y) \not\in\Omega.
\end{cases}
$$
Then the problem of the study of the counting function $\mathcal N(\Lambda)$ is reduced to the study of the number of the negative eigenvalues of the operator in $L^2(\Bbb H^d, y^{-d} dxdy)$.
$$
-\Delta_h - (d-1)^2/4 - \Lambda V,
$$
because the above operator is unitary equivalent to the operator
$$
-\Delta_h - (d-1)^2/4 - \Lambda
$$
in $L^2(\Omega, y^{-d} dxdy)$ with Dirichlet boundary conditions.
This number  coincides with the number of $k$'s such that $\nu_k<\Lambda$. Therefore
using the variational principle and applying Theorem \ref{main} with $\gamma = 1$ we find
$$
\sum_k(\nu_k -\Lambda)_-  = \sum_k(\Lambda -\nu_k)_+
\le L_{1,d} \, \Lambda^{1+d/2} \, \int_{\Omega} \frac{dxdy}{y^d} =
L_{1,d} \, \Lambda^{1+d/2} \, |\Omega|_h.
$$
Then for any $\Upsilon>\Lambda$
$$
\mathcal N(\Lambda) \le \frac{1}{\Upsilon - \Lambda}\, \sum_k (\Upsilon -\nu_k)_+
\le L_{1,d} \, \frac{\Upsilon^{1+d/2}}{\Upsilon - \Lambda} \, |\Omega|_h.
$$
Minimising the right hand side of the latter inequality we find
$$
\Upsilon = \Lambda \frac{1+d/2}{d/2}
$$
and thus
\begin{equation}\label{NLambda}
\mathcal N(\Lambda) \le
\left(1+\frac d2\right)
\left(1+\frac{2}{d}\right)^{d/2} \, R_{1,1} \, L_{1,d}^{\rm cl} \, |\Omega|_h\, \Lambda^{d/2}.
\end{equation}
This concludes the proof of Theorem \ref{NL}.

\setcounter{equation}{0}
\section{A special case of Theorem \ref{NL}}\label{Sec:6}

\medskip
\noindent
Let  $\Omega= \widetilde\Omega \times (a,b)\subset\Bbb H^d$, where $\widetilde\Omega\subset \Bbb R^{d-1}$ and $0<a<b\le\infty$.
We assume that the Lebesgue measure $|\widetilde\Omega|<\infty$ and consider the Dirichlet problem in  $L^2\left(\Omega, y^{-d} dxdy\right)$
\begin{equation*}
-\Delta_h u = \lambda u, \qquad u\big|_{\partial\Omega} = 0,
\end{equation*}
Using the substitution \eqref{subst} we reduce the problem to
\begin{equation}\label{reduced_lapl}
\left(-\partial_t^2 - e^{2t} \Delta_x  + \frac{(d-1)^2}{4}\right) \,v(x,t)  = \lambda v(x,t) \qquad  v\big|_{\widetilde\Omega \times (\alpha,\beta)} = 0,
\end{equation}
where $\alpha= \ln a$, $\beta = \ln b$. As before
it is convenient to introduce values $\nu$
$$
\lambda = \frac{(d-1)^2}{4} + \nu
$$
Due to the product structure of $\Omega$ the eigenfunctions $\{v_{\ell k}\}_{\ell, k=1}^\infty$ of the problem can be found as the product
$$
v_{\ell k}(x,t) = \varphi_\ell(x) \psi_{\ell k}(t),
$$
where $\varphi_\ell$ satisfy the Diriclet boundary value problem
$$
-\Delta \varphi_\ell = \varkappa_\ell \, \varphi_\ell, \qquad \varphi_\ell\big|_{\partial\widetilde\Omega} = 0.
$$
and
$$
-\partial_t^2\, \psi_{\ell k}(t) + e^{2t}\, \varkappa_{\ell} \, \psi_{\ell k} (t) = \nu_{\ell k} \, \psi_{\ell k}(t), \qquad
\psi_{\ell k}(t)\big|_{t=\alpha,\beta} = 0.
$$
Note that the functions $ \{\varphi_\ell\}_{\ell=1}^\infty $ give us an orthonormal basis in $L^2(\widetilde\Omega)$ and for each fixed $\ell$ the  set of functions $\{\psi_{\ell k}\}_{k=1}^\infty$ is an orthonormal basis in $L^2(\alpha,\beta)$.

\medskip
\noindent
Altogether we have the equation
\begin{equation*}
\left(-\partial_t^2 - e^{2t} \Delta_x\right) \, \varphi_\ell(x)\, \psi_{\ell k}(t) =  \nu_{\ell k}  \,  \varphi_\ell(x)\,  \psi_{\ell k}(t),
\end{equation*}
where
$$
\left\{ \frac{(d-1)^2}{4} + \nu_{\ell k}\right\}_{\ell,k=1}^\infty
$$
are all eigenvalues of the problem \eqref{reduced_lapl}.

\noindent
Using the notations from Section \ref{Sec:5} and applying
Lieb--Thirring inequalities for the $1/2$-moment of the
Schr\"odinger operator with the operator-valued potential
$-e^{2t}\Delta_x - \Lambda$ with the sharp constant
$ 2\, L_{1/2,1}^{\rm cl}$ (see \cite{HLW}) we obtain
\begin{multline*}
\sum_{\ell,k=1}^\infty (\Lambda - \nu_{\ell k})_+^{1/2} =
{\rm Tr}\, \left(-\partial^2_t - e^{2t}\, \Delta_x -\Lambda\right)_-^{1/2} \\
\le
2\, L_{1/2,1}^{\rm cl}
\int_\alpha^\beta {\rm Tr}\, \left(\Lambda +  e^{2t}\Delta_x \right)_+  dt.
\end{multline*}
The multiplier $e^{2t}$ in the study of the trace  ${\rm Tr}\, \left(\Lambda +  e^{2t}\Delta_x \right)_+$ could be considered as a constant.
Therefore applying Berezin--Li $\&$Yau inequality for the Dirichlet Laplacian in $\widetilde\Omega$  (see \cite{B} \cite{LYa} and also \cite{Lap})  we find
\begin{multline*}
{\rm Tr}\,
 \sum_{\ell} (\Lambda-  e^{2t}\varkappa_{\ell})_+  \\
\le
 |\widetilde\Omega| \, (2\pi)^{1-d} \, \int_{\Bbb R^{d-1}} (\Lambda - e^{2t}\,|\xi|^2)_+ \, d\xi = L_{1,d-1}^{\rm cl}\, |\widetilde\Omega| \,\Lambda^{(d+1)/2} \,e^{(1-d)t}.
\end{multline*}
Finally by using
$$
L_{1/2,1}^{\rm cl} L_{1,d-1}^{\rm cl} = L_{1/2,d}^{\rm cl}
$$
and  returning to variables $(t,x)$ we arrive at
\begin{multline*}
\sum_{\ell,k=1}^\infty (\Lambda - \nu_{\ell k})_+^{1/2} \le
\Lambda^{\frac{d+1}{2}} \,2\, L_{1/2,d}^{\rm cl}
\int_\alpha^\beta e^{(1-d)t} \, dt \, |\widetilde\Omega| \\
= \Lambda^{\frac{d+1}{2}} \,2\, L_{1/2,d}^{\rm cl} \int_{\Omega}
\frac{dx dy}{y^d} =\Lambda^{\frac{d+1}{2}} \, 2\, L_{1/2,d}^{\rm cl} \,
|\Omega|_h.
\end{multline*}
Using the standard Aizenman--Lieb arguments we can extent the above inequality to the $\gamma$-Riesz means with $1/2\le\gamma<1$ and obtain

\begin{theorem}\label{product}
Let  $\Omega= \widetilde\Omega \times (a,b)\subset\Bbb H^d$, where $\widetilde\Omega\subset \Bbb R^{d-1}$ and $0<a<b\le\infty$. Then for the values
$\{\nu_{\ell k}\}_{\ell,k=1}^\infty $ related to the eigenvalues of the Dirichlet Laplacian  \eqref{Delta_h} via the equation
$
\lambda_{\ell k} = \frac{(d-1)^2}{4} + \nu_{\ell k}
$
we have
\begin{equation}\label{1/2Special}
\sum_{\ell,k=1}^\infty (\Lambda - \nu_{\ell k})_+^{\gamma} \le
\Lambda^{\frac{d}{2}+\gamma} \, 2\, L_{\gamma,d}^{\rm cl} \,|\Omega|_h, \quad 1/2\le\gamma<1.
\end{equation}
\end{theorem}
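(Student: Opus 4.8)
The plan is to deduce the stated inequality for $1/2<\gamma<1$ from the case $\gamma=1/2$ already established in the displayed computation just above the theorem, by means of the Aizenman--Lieb lifting argument \cite{AizLieb}. The base case reads
$$
\sum_{\ell,k=1}^\infty (\Lambda-\nu_{\ell k})_+^{1/2}\le 2\,L_{1/2,d}^{\rm cl}\,|\Omega|_h\,\Lambda^{(d+1)/2},\qquad \Lambda>0,
$$
and since it holds for \emph{every} $\Lambda>0$, it will serve as input with $\Lambda$ replaced by $\Lambda-\tau$.

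First I would record the elementary identity, valid for $\gamma>1/2$ and any real $x$,
$$
(x)_+^\gamma=\frac{1}{B(\gamma-\tfrac12,\tfrac32)}\int_0^\infty (x-\tau)_+^{1/2}\,\tau^{\gamma-3/2}\,d\tau,
$$
which follows from the substitution $\tau=xu$ and the definition of the Beta function (the exponent $\gamma-3/2>-1$ guarantees integrability at the origin). Applying this with $x=\Lambda-\nu_{\ell k}$ and summing over $\ell,k$, all terms being nonnegative, Tonelli's theorem lets me interchange the sum and the integral:
$$
\sum_{\ell,k}(\Lambda-\nu_{\ell k})_+^\gamma=\frac{1}{B(\gamma-\tfrac12,\tfrac32)}\int_0^\infty\Big(\sum_{\ell,k}(\Lambda-\tau-\nu_{\ell k})_+^{1/2}\Big)\,\tau^{\gamma-3/2}\,d\tau.
$$

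Next I would insert the base-case bound for the inner sum with $\Lambda-\tau$ in place of $\Lambda$, after which the $\tau$-integral is again a Beta integral:
$$
\int_0^\Lambda (\Lambda-\tau)^{(d+1)/2}\,\tau^{\gamma-3/2}\,d\tau=\Lambda^{d/2+\gamma}\,B\!\left(\gamma-\tfrac12,\tfrac{d+3}{2}\right).
$$
This produces the claimed power $\Lambda^{d/2+\gamma}$ together with the factor $|\Omega|_h$, carrying the overall constant $2\,L_{1/2,d}^{\rm cl}\,B(\gamma-\tfrac12,\tfrac{d+3}{2})/B(\gamma-\tfrac12,\tfrac32)$.

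The only remaining point, requiring care rather than genuine difficulty, is to verify that this constant collapses to $2\,L_{\gamma,d}^{\rm cl}$. Writing $B(a,b)=\Gamma(a)\Gamma(b)/\Gamma(a+b)$, the common factors $\Gamma(\gamma-\tfrac12)$ cancel, and using $L_{\gamma,d}^{\rm cl}=\Gamma(\gamma+1)/\big((4\pi)^{d/2}\Gamma(\gamma+d/2+1)\big)$ a short Gamma-function bookkeeping gives $L_{1/2,d}^{\rm cl}\,B(\gamma-\tfrac12,\tfrac{d+3}{2})/B(\gamma-\tfrac12,\tfrac32)=L_{\gamma,d}^{\rm cl}$, which is exactly the sought identity. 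There is no real obstacle here: the whole argument is the standard monotone lifting in $\gamma$, and the nonnegativity of every summand makes the interchange of summation and integration automatic.
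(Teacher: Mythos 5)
Your proposal is correct and follows essentially the same route as the paper: the paper establishes the $\gamma=1/2$ case by the displayed computation preceding the theorem and then invokes the Aizenman--Lieb lifting in one sentence, which is exactly the argument you carry out (and your Beta/Gamma bookkeeping showing $L_{1/2,d}^{\rm cl}\,B(\gamma-\tfrac12,\tfrac{d+3}{2})/B(\gamma-\tfrac12,\tfrac32)=L_{\gamma,d}^{\rm cl}$ checks out). The only difference is one of presentation: you supply the details of the lifting that the paper leaves implicit.
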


\begin{remark}
The constant $2\, L_{\gamma,d}^{\rm cl}$, $1/2\le\gamma<1$ is better than the constant $2\, R_{1,1}\, L_{\gamma,d}^{\rm cl}$ that could be obtained from Theorem \ref{main}.
\end{remark}

Similarly to Section \ref{Sec:5} we can use the inequality \eqref{1/2Special} for estimating the counting function
$\mathcal N(\Lambda)$ for spectrum of the Dirichlet Laplacian \eqref{Delta_h} in domain with the product structure. Indeed, for any $\Upsilon>\Lambda$
$$
\mathcal N(\Lambda) \le \frac{1}{(\Upsilon - \Lambda)^{1/2}}\,
\sum_{\ell,k =1}^\infty(\Upsilon -\nu_{\ell k})_+^{1/2}
\le 2L_{1/2,d}^{\rm cl} \, \frac{\Upsilon^{(d+1)/2}}{(\Upsilon - \Lambda)^{1/2}} \, |\Omega|_h.
$$
Minimising the right hand side of the latter inequality we find
$
\Upsilon = \Lambda \frac{1+d}{d}
$
and thus
\begin{equation}\label{NSpecial}
\mathcal N(\Lambda) \le
\left(\frac{d+1}{d}\right)^{(d+1)/2} \, \sqrt{d}\, 2\, L_{1/2,d}^{\rm cl} \, |\Omega|_h\, \Lambda^{d/2}.
\end{equation}
However, the ratio of the constants \eqref{NSpecial} and \eqref{NLambda}
is greater than one and therefore Theorem \ref{product} does not imply
any improvement for the inequality \eqref{NLambda}.

\begin{figure}[htb]
	\centerline{\psfig{file=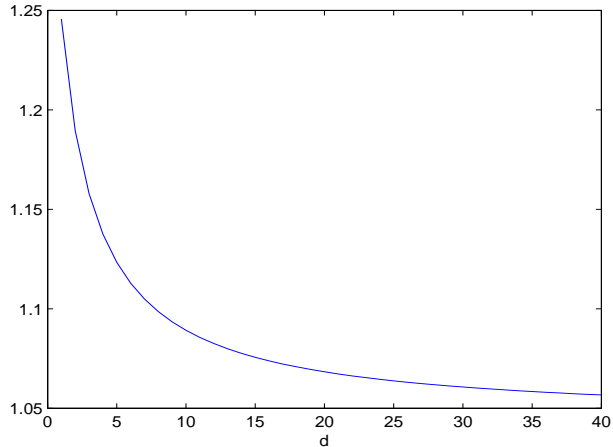,width=9.5cm,height=6.5cm,angle=0}}
	\caption{The graph of the ratio of the constants in
	\eqref{NSpecial} and \eqref{NLambda}: \eqref{NSpecial}$(d)$/\eqref{NLambda}$(d)$.}
	\label{Fig.1}
\end{figure}

\bigskip

\setcounter{equation}{0}
\section{A numerical example supporting conjecture \ref{conj} }\label{Sec:7}

Let $d=2$ and let
$$
\Omega =(0,\pi) \times (e^{-1},e) = \left\{(x,y):\, x\in(0,\pi),\,  y\in(e^{-1},e)\right\}
$$
with
$$
|\Omega|_h=\pi(e-e^{-1}).
$$
 Consider the Dirichlet Laplacian $-\Delta_h$ in
$L^2(\Omega, y^{-2} dxdy)$ defined in \eqref{Delta_h}. Using the
notations from Section \ref{Sec:6} we have $\alpha=-1$, $\beta=1$.
Obviuosly the eigenvalues of the problem
$$
-\partial^2_x \varphi= \varkappa \varphi, \quad \varphi\big|_{x=0,\pi} = 0
$$
are $ \varkappa_\ell = \ell^2$.

The arguments from Section \ref{Sec:6} imply that the problem is reduced
to the study of the eigenvalues  $\nu_{\ell k}$ satisfying the equation
\begin{equation}\label{lk}
-\partial_t^2\, \psi_{\ell k}(t) + e^{2t}\, \ell^2 \, \psi_{\ell k} (t) = \nu_{\ell k} \, \psi_{\ell k}(t), \qquad
\psi_{\ell k}(t)\big|_{t=-1,1} = 0.
\end{equation}

 Now comes numerics  to prove the following  P\'olya type inequality:

\begin{multline*}
\mathcal N(\Lambda) = \# \{\ell,k: \, \nu_{\ell k}< \Lambda\}
\le
(2\pi)^{-2} \int_0^\pi \int_{-1}^1 \int_{\xi_2^2 + e^{2t} \xi_1^2<\Lambda} d\xi_1 d\xi_2 dt dx\\
= (2\pi)^{-2} \pi \Lambda \int_{-1}^1 e^{-t} \int_{\xi_1^2 + \xi_2^2<1} d\xi_2 d\xi_1
= \frac14 \Lambda \left(e- e^{-1}\right)=L^{\rm cl}_{0,2}\Lambda
|\Omega|_h.
\end{multline*}

\begin{figure}[htb]
	\centerline{\psfig{file=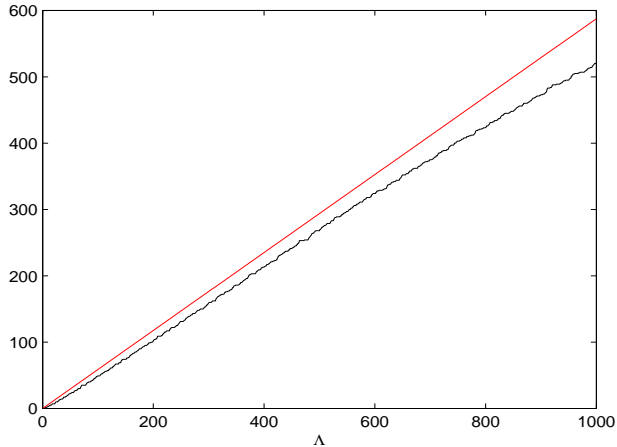,width=9.5cm,height=6.5cm,angle=0}}
	\caption{The graph $\mathcal N(\Lambda)$ shown in black and
the graph of $L^{\rm cl}_{0,2}\Lambda |\Omega|_h$ is shown in red. }
	\label{Fig.2}
\end{figure}

Let us say a few words about the calculations of the eigenvalues of the collection of
problems~\eqref{lk}. We use the Chebyshev differentiation matrix~\cite{Matlab}
for the spectral approximation
of the derivative (and this matrix squared   for the second derivative)
for the  numerical  solution of the  eigenvalue problems~\eqref{lk}
(we observe that the potentials are analytic).
We have used matrices of order $400\times 400$.
The accuracy is tested against the problem~\eqref{lk} with
$l=0$, so that the corresponding eigenvalues $n^2\cdot\frac4{\pi^2}$
are computed for $n=1,\dots,200$ with correct 14 decimal places.
We therefore reasonably expect that the accuracy is of the similar order
for $\ell\ge1$.

We set $\Lambda\le1000$. Then to calculate
$\mathcal N(\Lambda)$ it is enough to limit $l\le50$,
since the first eigenvalue of ~\eqref{lk} with $l=50$ is already greater than $1000$.
The eigenvalues $\nu_{\ell k}$ are of the order $k^2\cdot\frac4{\pi^2}$
and the therefore the length $[1:200]$ of each the sequence of eigenvalues
$\nu_{\ell k}$ taken into account for each fixed $\ell\le 50$ is also more than enough for
$\Lambda\le1000$.

\bigskip
\bigskip
\noindent

{\it Acknowledgements:} The authors would like to thank N. A. Zaitsev
for his help and advice concerning the computer calculations of the
eigenvalues of the Sturm--Liouville  problems~\eqref{lk}.

The work of A.I. was supported by
Moscow Center for Fundamental and Applied Mathematics, Agreement
with the Ministry of Science and Higher Education of the Russian
Federation, No.~075-15-2022-283. AL was supported by
the Ministry of Science and Higher
Education of the Russian Federation,
(Agreement 075-10-2021-093, Project MTH-RND-2124).

\end{document}